\def\qed{\hfill {\hbox{${\vcenter{\vbox{               %HOLLOW SQUARE
   \hrule height 0.4pt\hbox{\vrule width 0.4pt height 6pt
   \kern5pt\vrule width 0.4pt}\hrule height 0.4pt}}}$}}}
\def\utr{\, \underline{\triangleright}\, }
\def\otr{\, \overline{\triangleright}\, }
\def\ud{\, \underline{\bullet}\, }
\def\od{\, \overline{\bullet}\, }
\newtheorem{theorem}{Theorem}
\newtheorem{proposition}[theorem]{Proposition}
\theoremstyle{definition}
\newtheorem{example}{Example}
\newtheorem{definition}{Definition}
\date{}
\title{\Large \textbf{Psyquandle Brackets}}
\author{Sam Nelson\footnote{Email: Sam.Nelson@cmc.edu. Partially supported by Simons Foundation collaboration grant 702597.}\and
Natsumi Oyamaguchi\footnote{Email: Oyamaguchi@rs.tus.ac.jp.}}
\begin{document}
\maketitle

\begin{abstract}
We extend the notion of biquandle brackets to the case of psyquandles, defining
quantum enhancements of the psyquandle counting invariant for singular knots
and pseudoknots. We provide examples to illustrate the computation of these
invariants, establishing that the enhancement is proper. We compute a few
toy examples, noting that the true power of this infinite family of invariants
lies in more computationally expensive larger-cardinality psyquandles and 
infinite coefficient rings.
\end{abstract}

\parbox{6in} {\textsc{Keywords:} Quantum enhancements, psyquandles, psyquandle
counting invariants, biquandle brackets, psyquandle brackets, trace diagrams

\smallskip

\textsc{2012 MSC:} 57K12}

\section{\large\textbf{Introduction}}\label{I}

\textit{Quantum enhancements} are quantum invariants of knot
homset elements with respect to a coloring structure such as 
\textit{biquandles} in \cite{NOR, NO, FN} or \textit{tribrackets}
in \cite{ANR}. The multiset of values of such an invariant over the 
complete homset then yields a stronger invariant than the homset alone.

\textit{Singular knots and links} are 4-valent spatial graphs with 
\textit{rigid vertices}, i.e., such that the four edges at a vertex have a 
fixed cyclic ordering. \textit{Pseudoknots} are knots and links in which
some crossing information is unknown. 

In \cite{NOS}, the algebraic structures known as \textit{psyquandles} were 
introduced and used to define invariants of singular knots and links and 
pseudoknots, 
including integer-valued psyquandle counting invariants associated to finite
psyquandles as well as Alexander psyquandle invariants including the Jablan 
polynomial. In \cite{JCSN1} psyquandle cocycle invariants generalizing the
quandle cocycle invariants from \cite{CJKLS} were defined, and in \cite{JCSN2}
psyquandle quiver invariants were defined, extending the quandle coloring 
quiver invariants from \cite{ChoN} to the case of psyquandles.

\textit{Biquandle brackets} are skein invariants of biquandle-colored knots
and links, a type quantum enhancement originally proposed in 
\cite{NR}. 
They were first introduced in \cite{NOR} and later studied in work such
as \cite{GNO,IM,HVW,VW}. Biquandle brackets include the classical skein 
invariants and the
biquandle cocycle invariants as special cases. 

In \cite{FN} biquandle brackets were categorified using biquandle 
coloring quivers to obtain \textit{biquandle bracket quivers}. In \cite{HVW}
it was shown that many biquandle brackets are ``cohomologous'' to the Jones
polynomial in the sense that the differ from it by a biquandle 2-coboundary
and thus define the same polynomial invariant; however, the extra structure 
of the biquandle bracket quiver then provides an infinite family of new 
categorifications of the Jones polynomial and other classical skein invariants.

In this paper we extend the biquandle bracket idea to the case of psyquandles,
defining \textit{psyquandle brackets}.
The paper is organized as follows, In Section \ref{P} we review psyquandles
and the psyquandle counting invariant. In Section \ref{PB}
we introduce psyquandle brackets and define the psyquandle bracket
invariant for pseudoknots and singular knots. In Section
\ref{E} we provide some explicit examples to illustrate the computation
of the invariants and compute their values for some choices of
psyquandle brackets.
We conclude in Section \ref{Q} with some questions for future work.

This paper, including all text, diagrams and computational code, was produced 
strictly by the authors without the use of generative AI in any form.

\section{\large\textbf{Psyquandles}}\label{P}

We begin with a definition; see \cite{JCSN1,NOS} for more.

\begin{definition}
Let $X$ be a set. A \textit{psyquandle structure} on $X$ is a 4-tuple
of binary operations $\utr, \otr, \ud, \od$ on $X$ satisfying the following 
properties:
\begin{itemize}
\item[(0)] All four operations are right-invertible, i.e., there are operations
$\utr^{-1},\otr^{-1},\ud^{-1},\od^{-1}$ such that for all $x,y\in X$ we have
\[
\begin{array}{rcccl}
(x\utr y)\utr^{-1} y & = & x & = & (x\utr^{-1}y)\utr y \\
(x\otr y)\otr^{-1} y & = & x & = & (x\otr^{-1}y)\otr y \\
(x\ud y)\ud^{-1} y & = & x & = & (x\ud^{-1}y)\ud y \\
(x\od y)\od^{-1} y & = & x & = & (x\od^{-1}y)\od y,
\end{array}
\]
\item[(i)] For all $x\in X$ we have
\[x\utr x=x\otr x,\]
\item[(ii)] For all $x,y\in X$ the maps $S:X\times X\to X\times X$  and
$S':X\times X\to X\times X$ given by $S(x,y)=(y\otr x,x\utr y)$ and
$S'(x,y)=(y\od x,x\ud y)$ are bijective
\item[(iii)] For all $x,y,z\in X$ we have the \textit{exchange laws}
\[\begin{array}{rcl}
(x\utr y)\utr(z\utr y) & = & (x\utr y)\utr(z\otr y) \\
(x\otr y)\utr(z\otr y) & = & (x\utr y)\otr(z\utr y) \\
(x\otr y)\otr(z\otr y) & = & (x\otr y)\otr(z\utr y), \\
\end{array}\]
\item[(iv)] For all $x,y\in X$ we have
\[\begin{array}{rcl}
x\ud((y\otr x)\od^{-1} x) & = & [(x\utr y)\od^{-1} y]\otr[(y\otr x)\ud^{-1} x]\\
y\ud((x\utr y)\od^{-1} y) & = & [(y\otr x)\od^{-1} x]\utr[(x\utr y)\od^{-1} y],
\end{array}\]
and
\item[(v)] For all $x,y,z\in X$ we have
\[\begin{array}{rcl}
(x\otr y)\otr (z\od y) & = & (x\otr z)\otr (y\ud z) \\
(x\utr y)\utr (z\od y) & = & (x\utr z)\utr (y\ud z) \\
(x\otr y)\od (z\otr y) & = & (x\od z)\otr (y\utr z) \\
(x\utr y)\ud (z\utr y) & = & (x\ud z)\utr (y\otr z) \\
(x\otr y)\ud (z\otr y) & = & (x\ud z)\otr (y\utr z) \\
(x\utr y)\od (z\utr y) & = & (x\od z)\utr (y\otr z).
\end{array}\]
\end{itemize}
A psyquandle which also satisfies for all $x\in X$
\[x\ud x=x\od x\]
is called \textit{pI}-adequate.
\end{definition}

\begin{example}
Let $X$ be a set and $\sigma:X\to X$ a bijection. Then the operations
\[x\utr y=x\otr y=x\ud y=x\od y=\sigma(x)\]
define a psyquandle structure on $X$ known as a \textit{constant action
psyquandle}.
\end{example}

\begin{example}
Let $X$ be a $\mathbb{Z}$-module in which $2$ is invertible with choice of 
units $s,t$. Then $X$ is a \textit{Jablan Psyquandle} with operations
\[x\utr y=tx+(s-t)y,\quad x\otr y =sx,\quad 
x\ud y=x\od y=\frac{s+t}{2}x+\frac{s-t}{2}y.\]
\end{example}

\begin{example}\label{ex1}
We can specify a psyquandle structure on a finite set by listing the operation
tables; for example, the tables
\[
\begin{array}{r|rrr} 
\utr & 1 & 2 & 3 \\ \hline
1 & 1 & 1 & 1 \\
2 & 3 & 3 & 3 \\
3 & 2 & 2 & 2
\end{array}\quad
\begin{array}{r|rrr} 
\otr & 1 & 2 & 3 \\ \hline
1 & 1 & 1 & 1 \\
2 & 2 & 3 & 3 \\
3 & 3 & 2 & 2
\end{array}\quad
\begin{array}{r|rrr} 
\ud & 1 & 2 & 3  \\ \hline
1 & 1 & 1 & 1 \\
2 & 3 & 2 & 2 \\
3 & 2 & 3 & 3 
\end{array}\quad
\begin{array}{r|rrr} 
\od & 1 & 2 & 3  \\ \hline
1 & 1 & 1 & 1\\
2 & 2 & 2 & 2 \\
3 & 3 & 3 & 3
\end{array}\quad
\]
define a psyquandle structure on the set $X=\{1,2,3\}$.
\end{example}

\begin{example}
Associated to any singular knot or link $L$ there is a 
\textit{fundamental psyquandle} $\mathcal{P}(L)$ given by a presentation 
with generators corresponding to semiarcs in a diagram of $L$ with psyquandle 
relations at the crossings. The elements of this psyquandle are equivalence 
classes of \textit{psyquandle words} modulo the equivalence relation generated 
by the psyquandle axioms and the crossing relations. 

Similarly, the fundamental psyquandle of a pseduoknot or pseudolink $L$ has
a presentation with generators corresponding to semiarcs in a diagram of $L$ 
with psyquandle relations at the crossings; the elements of this psyquandle 
are equivalence classes of \textit{psyquandle words} modulo the equivalence 
relation generated by the Pi-adequate psyquandle axioms and the crossing 
relations. See \cite{NOS} for more.
\end{example}

\begin{definition}
A map $f:X\to Y$ between psyquandles is a \textit{homomorphism} if for all
$x,x'\in X$ we have
\begin{eqnarray*}
f(x\utr x') & = & f(x)\utr f(x'),\\
f(x\otr x') & = & f(x)\otr f(x'),\\
f(x\ud x') & = & f(x)\ud f(x')\quad \mathrm{and}\\
f(x\od x') & = & f(x)\od f(x').
\end{eqnarray*}
A self-homomorphism $f:X\to X$ is an \textit{endomorphism}.
\end{definition}

Now, let $K$ be an oriented knot or link diagram including classical crossings
and 4-valent vertices with pass-through orientation. If we interpret the 
vertices as rigid vertices, our
diagram represents a \textit{singular knot or link}; if we interpret the 
vertices as \textit{precrossings}, i.e., classical crossings where we do 
not know which strand is on top, then our diagram represents a 
\textit{pseudoknot or pseudolink}. In either case, a labeling of the
semiarcs (segments of the diagram between crossing points) by elements of
a psyquandle $X$ is called a \textit{psyquandle coloring} or an $X$-coloring
if at every crossing the labels are related as depicted:
\[\includegraphics{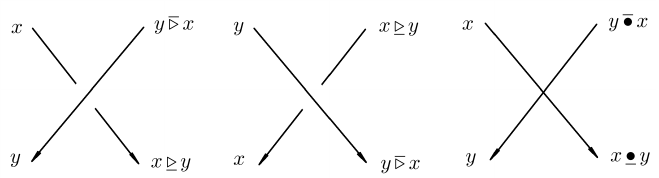}\]

Then as shown in \cite{NOR}, the \textit{singular Reidemeister moves} 
\[\includegraphics{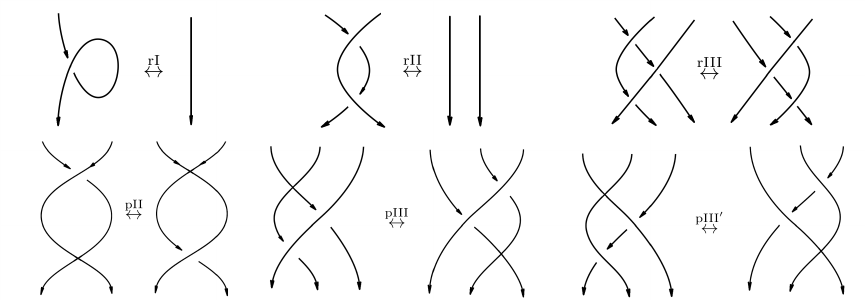}\]
(as well as their mirror images and alternate choices of orientation)
do not change the number of psyquandle colorings of a singular knot or link,
and if $X$ is pI-adequate then the \textit{pseudoknot Reidemeister moves}
i.e., the singular Reidemeister moves plus the move
\[\includegraphics{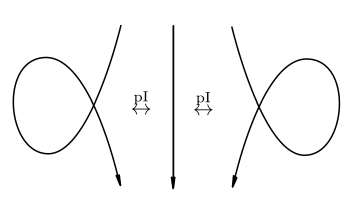}\]
do not change the number of psyquandle colorings of a pseudoknot or pseudolink.
The number of such colorings is known as the \textit{psyquandle counting
invariant}, denoted $\Phi_X^{\mathbb{Z}}(K)$.

In particular, given a psyquandle coloring of a diagram, there is a unique 
psyquandle coloring of the resulting diagram after any Reidemeister move. It 
follows that any invariant $\phi$ of psyquandle-colored diagrams yields an 
invariant of (singular or pseudo-) knots defined as the multiset of 
$\phi$-values
over the set of psyquandle colorings of a diagram of a singular knot or 
pseudoknot $K$, known as an \textit{enhancement} of the psyquandle counting
invariant. Such enhancements determine the counting invariant by taking the
cardinality of the multiset, but in general they contain more information
and form stronger and sharper invariants. We say an enhancement is 
\textit{proper} if it can distinguish pseudoknots or singular knots which
have the same counting invariant for a given psyquandle. 

Interpreting the semiarc labels as images of the generators of 
$\mathcal{P}(K)$, each colorings determines a unique psyquandle homomorphism
$f:\mathcal{P}(K)\to X$. Indeed, $X$-colored diagrams represent elements of 
the \textit{psyquandle homset} $\mathrm{Hom}(\mathcal{P}(K),X)$
analogously to matrices representing linear transformations with respect to
a choice of basis. Psyquandle-colored Reidemeister moves on diagrams yield 
the same homset element with respect to the the new diagram, analogously to 
applying a change-of-basis matrix. In particular,
the psyquandle counting invariant is the cardinality of the homset:
\[\Phi_X^{\mathbb{Z}}(K)=|\mathrm{Hom}(\mathcal{P}(K),X)|.\]

\section{\large\textbf{Psyquandle Brackets}}\label{PB}

We now come to our main definition.

\begin{definition}
Let $X$ be a finite psyquandle and $R$ a commutative unital ring.
A \textit{Psyquandle Bracket} on $X$ with values in $R$ is a set of four maps
$A,B,P,S:X\times X\to R$ (where we write $A_{x,y}=A(x,y)$, etc.) satisfying the 
following conditions:
\begin{itemize}
\item[(0)] For all $x,y\in X,$ the elements $A_{x,y},B_{x,y}$ are units in $R$,
\item[(i)] for all $x\in X$, the elements $-A_{x,x}^2B_{x,x}^{-1}$ are all equal, 
with their common value denoted by  $w$,
\item[(ii)] for all $x,y\in X$, the elements
\[-A_{x,y}B_{x,y}^{-1}-A_{x,y}^{-1}B_{x,y}\]
are all equal, with their common value denoted by $\delta$,
\item[(iii)] for all $x,y,z\in X$, 
\[\begin{array}{rcl}
A_{x,y}A_{y,z}A_{x\utr y,z\otr y} & = & A_{x,z}A_{y\otr x,z\otr x}A_{x\utr z,y\utr z} \\
A_{x,y}B_{y,z}B_{x\utr y,z\otr y} & = & B_{x,z}B_{y\otr x,z\otr x}A_{x\utr z,y\utr z} \\
B_{x,y}A_{y,z}B_{x\utr y,z\otr y} & = & B_{x,z}A_{y\otr x,z\otr x}B_{x\utr z,y\utr z} \\
A_{x,y}A_{y,z}B_{x\utr y,z\otr y} & = & 
A_{x,z}B_{y\otr x,z\otr x}A_{x\utr z,y\utr z} 
+A_{x,z}A_{y\otr x,z\otr x}B_{x\utr z,y\utr z} \\ 
& & +\delta A_{x,z}B_{y\otr x,z\otr x}B_{x\utr z,y\utr z} 
+B_{x,z}B_{y\otr x,z\otr x}B_{x\utr z,y\utr z} \\
B_{x,y}A_{y,z}A_{x\utr y,z\otr y} 
+A_{x,y}B_{y,z}A_{x\utr y,z\otr y} & & \\
+\delta B_{x,y}B_{y,z}A_{x\utr y,z\otr y} 
+B_{x,y}B_{y,z}B_{x\utr y,z\otr y}  
& = & B_{x,z}A_{y\otr x,z\otr x}A_{x\utr z,y\utr z} \\
\end{array}\]
\item[(iv)] for all $x,y\in X$ we have
\begin{eqnarray*}
A_{x,y}P_{y,(x\utr y)\od^{-1}y} 
& = & A_{(y\otr x)\od^{-1}x,(x\utr y)\od^{-1}y}P_{x,(y\otr x)\od^{-1}x} \\
A_{x,y}S_{y,(x\utr y)\od^{-1}y}
+B_{x,y}P_{y,(x\utr y)\od^{-1}y} & & \\
+\delta B_{x,y}S_{y,(x\utr y)\od^{-1}y} & = &
B_{(y\otr x)\od^{-1}x,(x\utr y)\od^{-1}y}P_{x,(y\otr x)\od^{-1}x} \\ & &
+A_{(y\otr x)\od^{-1}x,(x\utr y)\od^{-1}y}S_{x,(y\otr x)\od^{-1}x} \\ & &
+\delta B_{(y\otr x)\od^{-1}x,(x\utr y)\od^{-1}y}S_{x,(y\otr x)\od^{-1}x}
\end{eqnarray*}
\item[(v)] For all $x,y,z\in X$ we have
\begin{eqnarray*}
A_{x,y}P_{y,z}A_{x\utr y,z\od y} & = & A_{x,z}P_{y\otr x,z\otr x}A_{x\utr z,y\ud z} \\
A_{x,y}S_{y,z}B_{x\utr y,z\od y} & = & B_{x,z}S_{y\otr x,z\otr x}A_{x\utr z,y\ud z} \\
B_{x,y}P_{y,z}B_{x\utr y,z\od y} & = & B_{x,z}P_{y\otr x,z\otr x}B_{x\utr z,y\ud z} \\
A_{x,y}P_{y,z}B_{x\utr y,z\od y} & = & 
A_{x,z}S_{y\otr x,z\otr x}A_{x\utr z,y\ud z} 
+A_{x,z}P_{y\otr x,z\otr x}B_{x\utr z,y\ud z} \\ 
& & +\delta A_{x,z}S_{y\otr x,z\otr x}B_{x\utr z,y\ud z} 
+B_{x,z}S_{y\otr x,z\otr x}B_{x\utr z,y\ud z} \\
B_{x,y}P_{y,z}A_{x\utr y,z\od y} 
+A_{x,y}S_{y,z}A_{x\utr y,z\od y} & & \\
+\delta B_{x,y}S_{y,z}A_{x\utr y,z\od y} 
+B_{x,y}S_{y,z}B_{x\utr y,z\od y}  
& = & B_{x,z}P_{y\otr x,z\otr x}A_{x\utr z,y\ud z} \\
\end{eqnarray*}
and
\begin{eqnarray*}
P_{x,y}A_{y,z}A_{x\ud y,z\otr y} & = & A_{x,z}A_{y\od x,z\otr x} P_{x\utr z,y\utr z} \\
S_{x,y}A_{y,z}B_{x\ud y,z\otr y} & = & B_{x,z}A_{y\od x,z\otr x} S_{x\utr z,y\utr z} \\
P_{x,y}B_{y,z}B_{x\ud y,z\otr y} & = & B_{x,z}B_{y\od x,z\otr x} P_{x\utr z,y\utr z} \\
P_{x,y}A_{y,z}B_{x\ud y,z\otr y} & = & 
A_{x,z}B_{y\od x,z\otr x} P_{x\utr z,y\utr z} + A_{x,z}A_{y\od x,z\otr x} S_{x\utr z,y\utr z} 
\\
& & +\delta A_{x,z}B_{y\od x,z\otr x} S_{x\utr z,y\utr z} +B_{x,z}B_{y\od x,z\otr x} S_{x\utr z,y\utr z} \\
P_{x,y}B_{y,z}A_{x\ud y,z\otr y} +S_{x,y}A_{y,z}A_{x\ud y,z\otr y}  & & \\
+ \delta S_{x,y}B_{y,z}A_{x\ud y,z\otr y} +S_{x,y}B_{y,z}B_{x\ud y,z\otr y}  
& = & B_{x,z}A_{y\od x,z\otr x} P_{x\utr z,y\utr z} \\
\end{eqnarray*}
\end{itemize}
If in addition to the above we have for all $x\in X$
\[\delta P_{x,x}+S_{x,x}=1\]
then we say our psyquandle bracket is \textit{pI-Adequate}.
\end{definition}

This definition is motivated by the following set of skein relations, in which
crossings are replaced with dashed edges known as \textit{traces} decorated 
with a $+,-$ or $\bullet$ to indicate the original crossing type:
\[\includegraphics{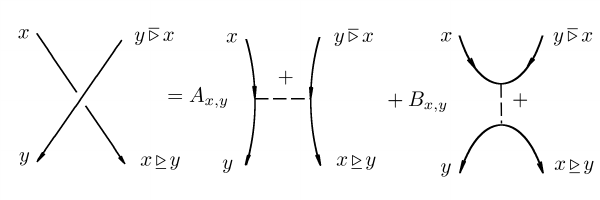}\]
\[\includegraphics{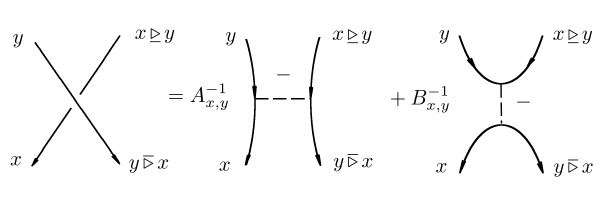}\]
\[\includegraphics{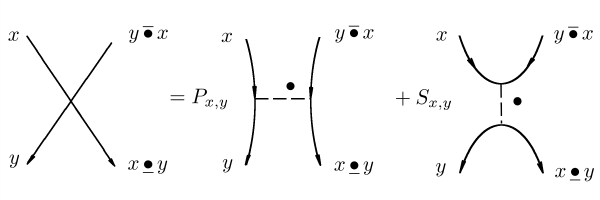}\]
The scalar $\delta=-A_{x,y}B^{-1}_{x,y}-A^{-1}_{x,y}B_{x,y}$ is the value of a circle
in a diagram with all crossings replaced by traces (known as a 
\textit{Kauffman state}) obtained by deleting the traces after smoothing 
all crossings and the scalar $w=-A_{x,x}^2B_{x,x}^{-1}$ is the \textit{writhe 
correction factor}.

\begin{definition}
Let $X$ be a psyquandle and $\beta$ a psyquandle bracket structure
on $X$ with values in a commutative unital ring $R$. Let $D$
be an oriented singular link diagram (or pseudolink diagram provided 
$X$ and $\beta$ are PI-adequate) with an 
$X$-coloring. Then the \textit{psyquandle bracket value} $\beta(D)$
is the state-sum value obtained by summing over all Kauffman states 
(i.e., completely smoothed states) the product of $\delta^k$ times the 
product of smoothing coefficients where $k$ is the number of components
in the state. The multiset of $\beta(D)$ values over the set of all 
$X$-colorings is the \textit{psyquandle bracket multiset} of the singular 
link or pseudolink $L$ represented by $D$. We will denote this multiset by
$\Phi_X^{\beta}(L)$.
\end{definition}

We can specify psyquandle bracket structures on finite psyquandles
by giving tables of the skein coefficients.

\begin{example}\label{ex2}
Let $X=\{1,2,3\}$ have the psyquandle structure given in Example \ref{ex1}
and let $R=\mathbb{Z}_5$. Then our \texttt{python} computations reveal
psyquandle bracket structures on $X,R$ including
\[
\begin{array}{r|rrr}
A & 1 & 2 & 3 \\ \hline
1 & 1 & 3 & 3 \\
2 & 2 & 4 & 3 \\
3 & 1 & 2 & 4
\end{array}\quad
\begin{array}{r|rrr}
B & 1 & 2 & 3 \\ \hline
1 & 2 & 1 & 1 \\
2 & 1 & 2 & 4 \\
3 & 3 & 1 & 2
\end{array}\quad
\begin{array}{r|rrr}
P & 1 & 2 & 3 \\ \hline
1 & 1 & 3 & 3 \\
2 & 2 & 3 & 3 \\
3 & 1 & 3 & 3
\end{array}\quad
\begin{array}{r|rrr}
S & 1 & 2 & 3 \\ \hline
1 & 1 & 1 & 1 \\
2 & 1 & 2 & 3 \\
3 & 3 & 3 & 2 
\end{array}.
\]
%This particular psyquandle bracket has 
%$\delta=-A_{x,y}^{-1}B_{x,y}-A_{x,y}B_{x,y}^{-1}=-(1)(2)-(1)(3)=-2=3$ and 
%writhe correction factor $w=-1^2(1)=-1=4$.
\end{example}

We now come to our main theorem.

\begin{proposition}
Let $X$ be a psyquandle and $\beta$ a psyquandle bracket structure
on $X$ with values in a commutative ring $R$ with identity. Then 
$\Phi_X^{\beta}(L)$ is an invariant of singular isotopy and an invariant
of pseudoknots if $X$ and $\beta$ are PI-adequate.
\end{proposition}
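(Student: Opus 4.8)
The plan is to show that the state-sum value $\beta(D)$, normalized by the writhe correction factor $w$ of Condition (i) exactly as in the biquandle bracket setting, is unchanged under each of the generating moves. The excerpt already establishes that every colored Reidemeister move induces a bijection between the $X$-colorings of the two diagrams, so it suffices to prove a purely local statement: for each move and each fixed coloring of the boundary arcs of the disk in which the move is performed, the weighted sum over Kauffman states of the tangle inside the disk agrees on the two sides. Since smoothing is carried out crossing-by-crossing and the weights multiply, every Kauffman state of the entire diagram factors as a state of the local tangle times a fixed state of its complement; hence matching the local sums forces equality of the full state sums, and the global $\delta^k$ circle-count stays consistent because the local moves do not change the number of closed loops in the complement.

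First I would dispose of the classical moves exactly as for biquandle brackets. A positive crossing colored $x,y$ is written as $A_{x,y}$ times its oriented smoothing plus $B_{x,y}$ times the other smoothing; Condition (0) makes these weights invertible, a negative crossing is smoothed with the inverse weights $A_{x,y}^{-1}$ and $B_{x,y}^{-1}$, and the Reidemeister II move then collapses precisely when a free loop is evaluated as the scalar $\delta$ of Condition (ii). Condition (i) makes the curl in Reidemeister I contribute the single scalar $w$, which the writhe normalization cancels. Reidemeister III is the first real computation: expanding all $2^3$ smoothings of the three crossings on each side yields linear combinations of the same eight planar tangles, and equating the coefficients of corresponding tangles produces exactly the five identities of Condition (iii).

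Next I would treat the moves involving a rigid vertex. A vertex colored $x,y$ is expanded with the weights $P_{x,y}$ and $S_{x,y}$ in place of $A$ and $B$, and the singular Reidemeister moves in which a strand slides past a vertex are handled just as Reidemeister III: expanding both sides into Kauffman states and matching coefficients reproduces, for the two over/under configurations, the two families of five equations in Condition (v), while the singular move relating a vertex to an adjacent crossing --- the move underlying psyquandle Axiom (iv) --- reproduces the two equations of Condition (iv). Throughout, the psyquandle axioms of Section \ref{P} are precisely what guarantee that the boundary colorings on the two sides agree, so that corresponding tangles carry identical subscripts (built from $\utr$, $\otr$, $\ud$, $\od$ and their inverses) and their coefficients may legitimately be compared. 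Finally, for pseudoknots one checks the PI move: expanding the precrossing there leaves a single coefficient $\delta P_{x,x}+S_{x,x}$ multiplying the trivial arc, and the pI-adequacy hypothesis sets this equal to $1$, which is exactly the condition for that move to preserve the state sum.

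The main obstacle is the bookkeeping in the three-strand moves. One must enumerate every Kauffman state of both local tangles, propagate the psyquandle subscripts created by the coloring through each smoothing, and verify that the two sides expand over the \emph{same} collection of planar tangles before any coefficients are equated; getting the subscripts to coincide is exactly where the psyquandle axioms are consumed. Once the tangles are matched, however, the required equalities are verbatim the defining identities (iii), (iv), (v) and pI-adequacy, so no algebra beyond invoking the definition remains. The substance of the proof is therefore the faithful translation of each state-sum expansion into its governing bracket axiom rather than any further manipulation.
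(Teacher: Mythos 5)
Your proposal is correct and follows essentially the same route as the paper: reduce invariance to local state-sum comparisons for each $X$-colored move, dispose of the classical moves as in the biquandle bracket case, and match the vertex/precrossing moves to bracket axioms (iv) and (v) and the pI move to the pI-adequacy condition $\delta P_{x,x}+S_{x,x}=1$. The one technical wrinkle the paper adds, which your ``two over/under configurations'' glosses over, is that the slide move with negative crossings (pIII$'$) is first replaced by an equivalent all-positive-crossing move (pIII$''$) so that the resulting coefficient identities are verbatim the second family of axiom (v) rather than equations involving inverse coefficients.
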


\begin{proof}
This follows from the fact that for a given psyquandle coloring of
an oriented singular link or pseudoknot (in the PI-adequate case), the
$\beta$-values are unchanged by $X$-colored Reidemeister moves. To see
this, we can compare the sets of coefficients and Kauffman states
on both sides of each move involving singular/pre-crossings; the classical
cases are the same as in \cite{NOR,NO}.

For move pII we have
\[\scalebox{0.9}{\includegraphics{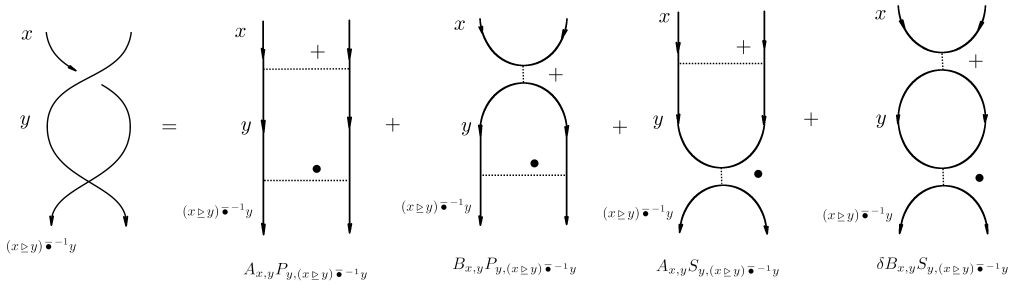}}\]
and
\[\scalebox{1}{\includegraphics{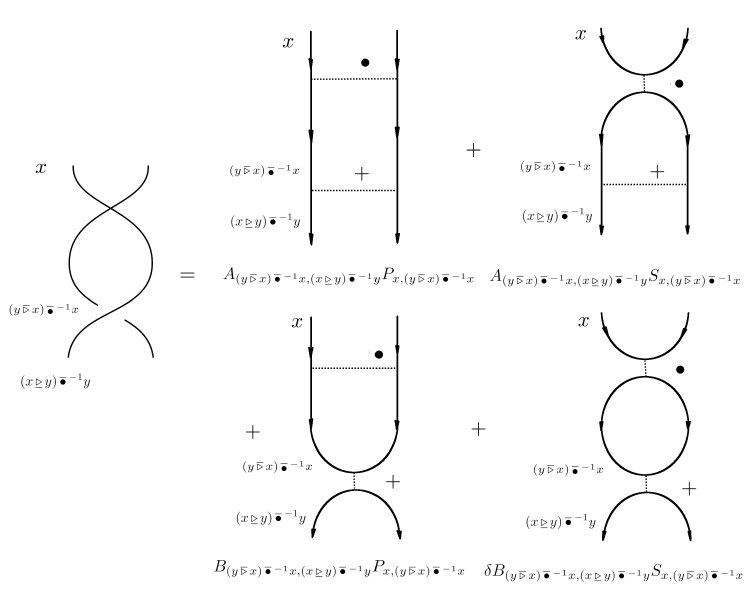}}\]
and comparing coefficients after deleting traces and biquandle colors
yields the axiom.

For move pIII we have
\[\scalebox{0.9}{\includegraphics{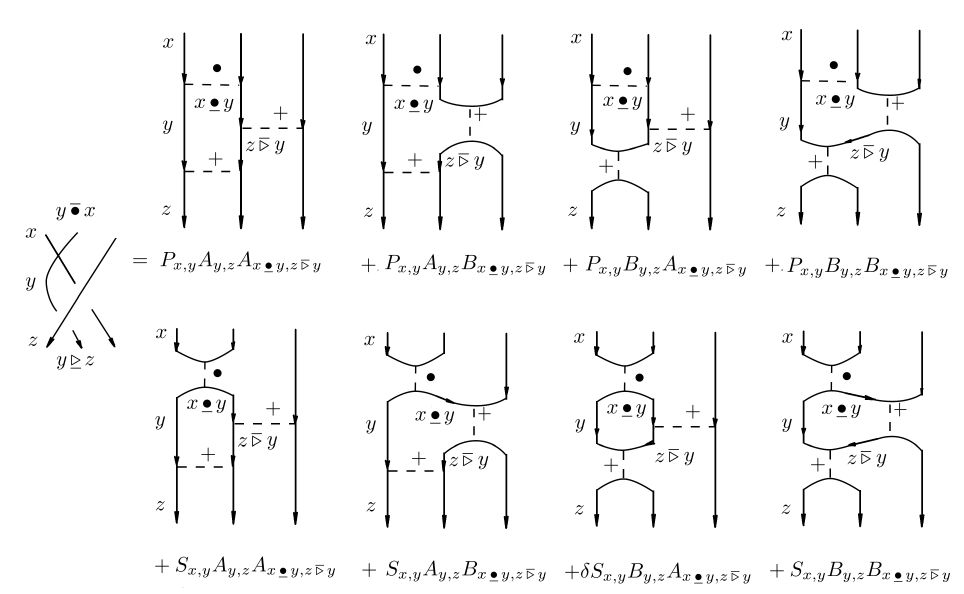}}\]
and
\[\scalebox{0.9}{\includegraphics{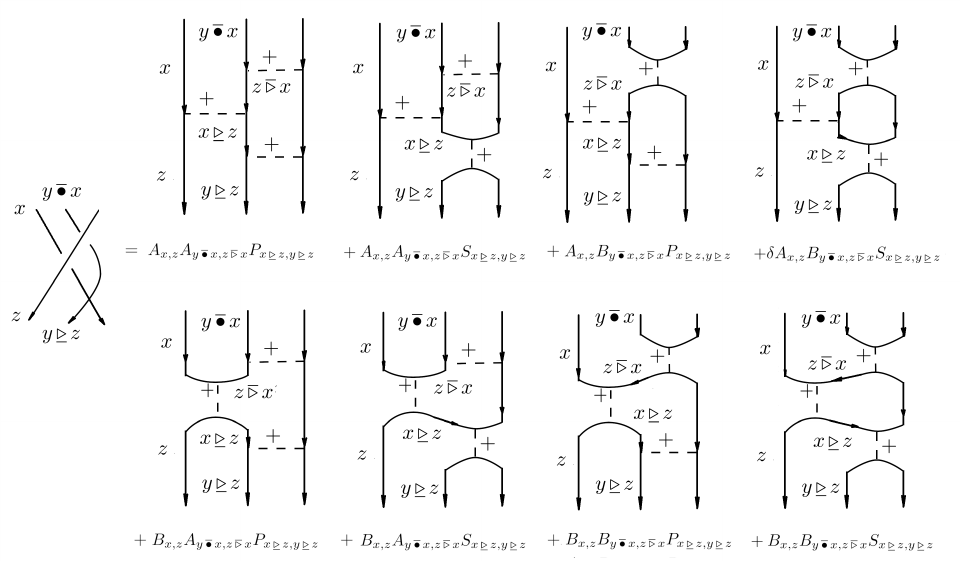}}\]
and comparing coefficients after deleting traces and biquandle colors
yields the axiom.

For move pIII$'$ which has negative crossings, we replace the move 
with the equivalent move pIII$''$
\[\includegraphics{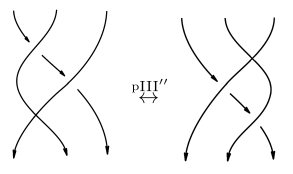}\]
to avoid inverses in our axioms.
(To see the equivalence, we note that
\[\includegraphics{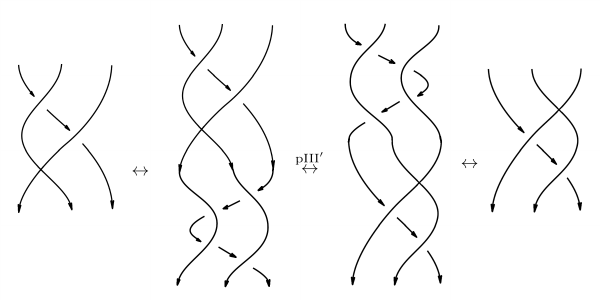}\]
implies that pIII$'\Rightarrow$pIII$''$ and that the converse is similar.)
Then we have
\[\includegraphics{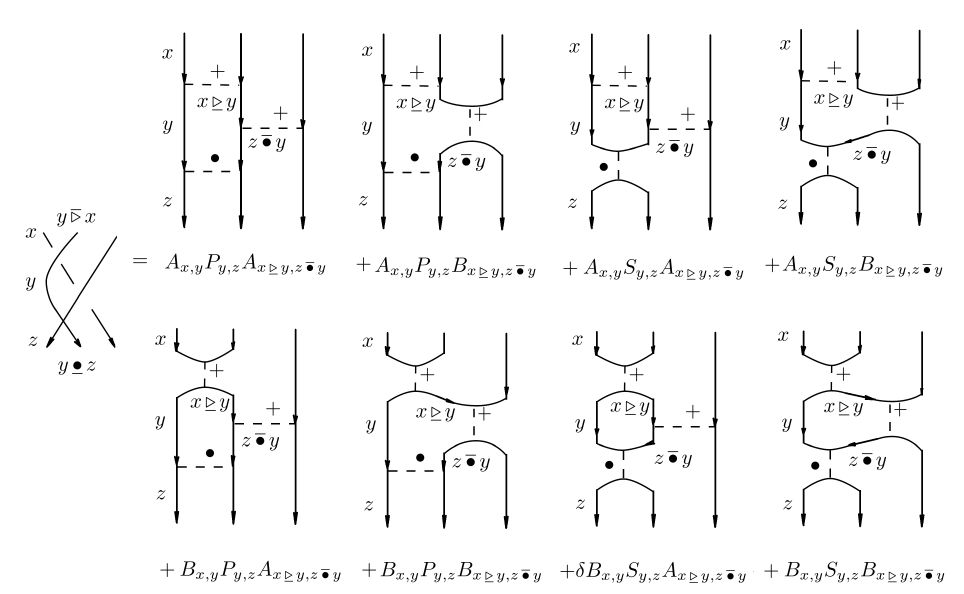}\]
and
\[\includegraphics{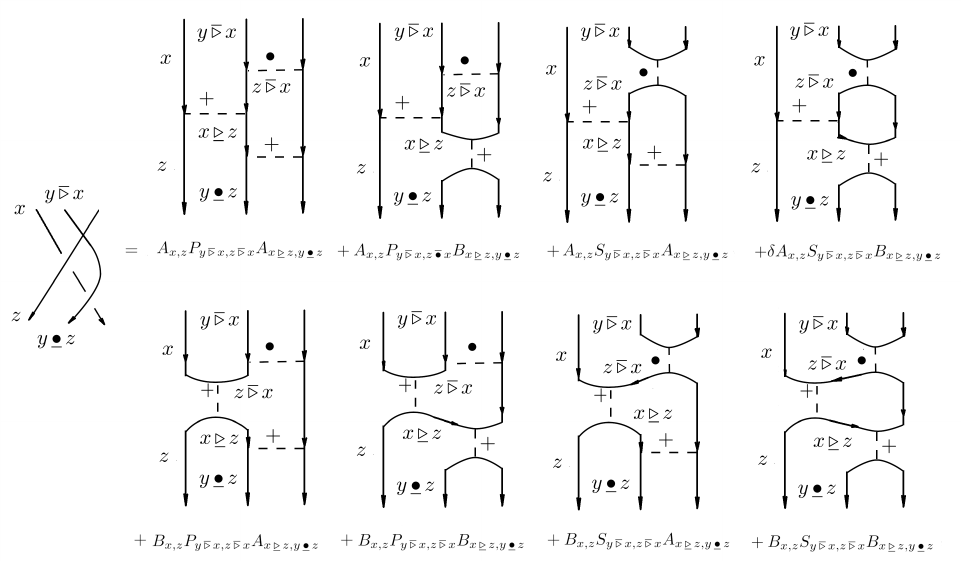}\]
and comparing coefficients after deleting traces and biquandle colors
yields the axiom.

Finally, for PI-adequate psyquandles $X$ we have
\[\includegraphics{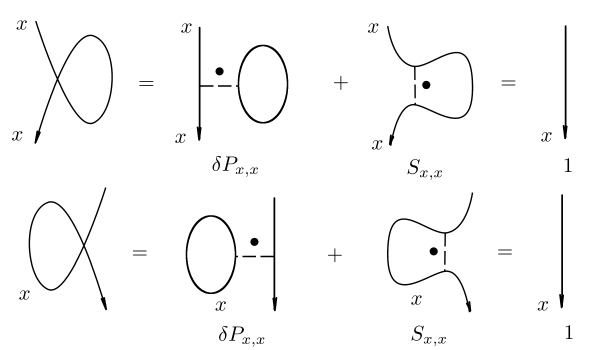}\]
and comparing coefficients after deleting traces and biquandle colors
yields the axiom.
\end{proof}

\section{\large\textbf{Examples}}\label{E}

In this section we illustrate the computation of the invariants defined in the
previous section and collect a few additional examples. Note that these are 
just ``toy'' examples we can find easily by computer search; the real power 
of this infinite family of invariants will be realized by using larger 
psyquandles and infinite coefficient rings.

\begin{example}
Consider the pI-adequate psyquandle $X$ and psyquandle bracket $\beta$
with $\mathbb{Z}_9$ coefficients given by the operation tables
\[
\begin{array}{r|rr}
\utr & 1 & 2 \\ \hline
1 & 2 & 2 \\
2 & 1 & 1
\end{array}
\quad
\begin{array}{r|rr}
\otr & 1 & 2 \\ \hline
1 & 2 & 2 \\
2 & 1 & 1
\end{array}
\quad
\begin{array}{r|rr}
\ud & 1 & 2 \\ \hline
1 & 1 & 1 \\
2 & 2 & 2
\end{array}
\quad
\begin{array}{r|rr}
\od & 1 & 2 \\ \hline
1 & 1 & 1 \\
2 & 2 & 2
\end{array}
\]
and coefficient tables
\[
\begin{array}{r|rr}
A & 1 & 2 \\ \hline
1 & 1 & 1 \\
2 & 8 & 1
\end{array}
\quad
\begin{array}{r|rr}
B & 1 & 2 \\ \hline
1 & 8 & 8\\
2 & 1 & 8
\end{array}
\quad
\begin{array}{r|rr}
P & 1 & 2 \\ \hline
1 & 4 & 4 \\
2 & 5 & 4
\end{array}
\quad
\begin{array}{r|rr}
S & 1 & 2 \\ \hline
1 & 2 & 5 \\
2 & 4 & 2
\end{array}.
\]
We observe that $\delta=-A_{11}B_{11}^{-1}-A_{11}^{-1}B_{11}=-1(8)-1(8)=2$ and
that $w=-A_{11}^2B_{11}^{-1}=-(1)^2(8)^{-1}=1$.
%>>> X2,Y[557],JL[51]
%([[[[2, 2], [1, 1]], [[2, 2], [1, 1]]], [[[1, 1], [2, 2]], [[1, 1], [2, 2]]]], (((1, 1), (8, 1)), ((8, 8), (1, 8))), (((4, 4), (5, 4)), ((2, 5), (4, 2))))
%>>> %
Let us illustrate the process of computation of the invariant with
the pseudoknot $3_1.3$. First, we find the set of $X$-colorings, of which 
there are two:
\[\includegraphics{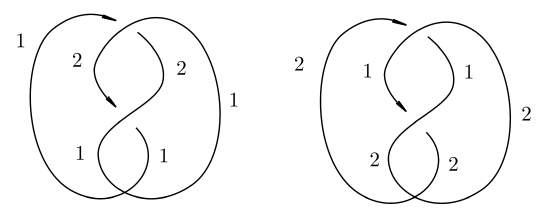}\]
Next, for each coloring we find the set of Kauffman states, each of which
has an associated coefficient product and power of $\delta$. 
\[\scalebox{1.2}{\includegraphics{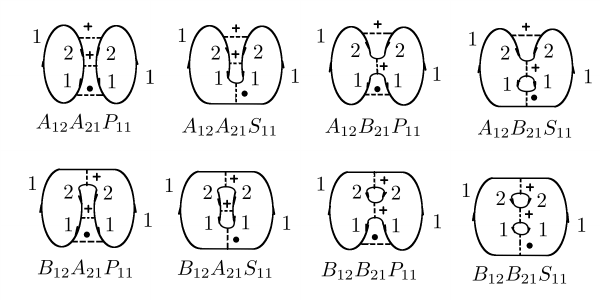}}\]
We sum these over the set of states and multiply by the writhe correction factor
$w^{n-p}=1^{0-2}=1$ to get the psyquandle bracket value for this coloring:
\begin{eqnarray*}
\beta & = & 
A_{12}A_{21}P_{11}\delta^2
+A_{12}A_{21}S_{11}\delta 
+A_{12}B_{21}P_{11}\delta 
+A_{12}B_{21}S_{11}\delta^2 \\ & &
+B_{12}A_{21}P_{11}\delta  
+B_{12}A_{21}S_{11}\delta^2  
+B_{12}B_{21}P_{11}\delta^2  
+B_{12}B_{21}S_{11}\delta^3 \\
& = & 
1(8)(4)(2)^2+ 1(8)(2)(2)+ 1(1)(4)(2)+ 1(1)(2)(2)^2\\ & & +
8(8)(4)(2)+ 8(8)(2)(2)^2+ 8(1)(4)(2)^2+ 8(1)(2)(2)^3\\
& = & 
2+ 5+ 8+ 8+
8+ 8+ 2+ 2=11-4=7
\end{eqnarray*}
and repeat over the set of all colorings to obtain the multiset version of 
the invariant, $\Phi_{X}^{\beta}(3_1.3)=2u^7.$

Repeating for the pseudoknot $3_1.2$, we see that this invariant
distinguishes the two with
$\Phi_{X}^{\beta}(3_1.3)=2u^7\ne 2u^4=\Phi_{X}^{\beta}(3_1.2)$
while the psyquandle counting invariant 
$\Phi_X^{\mathbb{Z}}(3_1.3)=2=\Phi_X^{\mathbb{Z}}(3_1.2)$ alone does not. In 
particular, the psyquandle bracket polynomial invariant is a proper 
enhancement of the psyquandle counting invariant.

Computing the invariant for the pseudoknots in the table at \cite{O}
we see that the invariant is quite effective at distinguishing pseudoknots
which are not distinguished by the counting invariant alone:
%[[(3, 1, 1), 2*u**2], [(3, 1, 2), 2*u**4], [(3, 1, 3), 2*u**7], [(4, 1, 1), 2*u**2], [(4, 1, 2), 2*u**4], [(4, 1, 3), 2*u**2], [(4, 1, 4), 2*u**5], [(4, 1, 5), 2*u**7], [(5, 1, 1), 2*u**2], [(5, 1, 2), 2*u**8], [(5, 1, 3), 2*u**7], [(5, 1, 4), 2*u**4], [(5, 1, 5), 2*u**2], [(5, 2, 1), 2*u**2], [(5, 2, 2), 2*u**8], [(5, 2, 3), 2*u**7], [(5, 2, 4), 2*u**5], [(5, 2, 5), 2*u**2], [(5, 2, 6), 2*u**2], [(5, 2, 7), 2*u**5], [(5, 2, 8), 2*u**7], [(5, 2, 9), 2*u**4], [(5, 2, 10), 2*u**2]]
\[\begin{array}{r|l}
\Phi_{X}^{\beta}(K) & K \\ \hline
2u^2 & 3_1.1, 4_1.1, 4_1.3, 5_1.1, 5_1.5, 5_2.1, 5_2.5, 5_2.6, 5_2.10 \\
2u^4 & 3_1.2, 4_1.2, 5_1.4, 5_2.9 \\
2u^5 & 4_1.4, 5_2.4, 5_2.7 \\
2u^7 & 3_1.3, 4_1.5, 5_1.3, 5_2.3, 5_2.8 \\
2u^8 & 5_1.2, 5_2.2.
\end{array}\]

\end{example}

\begin{example}
Consider the psyquandle and psyquandle bracket over $\mathbb{Z}_6$ given by
\[
\begin{array}{r|rrr} 
\utr & 1 & 2 & 3 \\ \hline
1 & 2 & 2 & 1 \\
2 & 1 & 1 & 2 \\
3 & 3 & 3 & 3
\end{array}\quad
\begin{array}{r|rrr} 
\otr & 1 & 2 & 3 \\ \hline
1 & 2 & 2 & 2 \\
2 & 1 & 1 & 1 \\
3 & 3 & 3 & 3
\end{array}\quad%
\begin{array}{r|rrr} 
\ud & 1 & 2 & 3  \\ \hline
1 & 1 & 1 & 1 \\
2 & 2 & 2 & 2 \\
3 & 3 & 3 & 3 
\end{array}\quad
\begin{array}{r|rrr} 
\od & 1 & 2 & 3  \\ \hline
1 & 1 & 1 & 2\\
2 & 2 & 2 & 1 \\
3 & 3 & 3 & 3
\end{array}\quad
\]
and
\[
\begin{array}{r|rrr}
A & 1 & 2 & 3 \\ \hline
1 & 1 & 1 & 1 \\
2 & 1 & 1 & 1 \\
3 & 5 & 5 & 1
\end{array}\quad
\begin{array}{r|rrr}
B & 5 & 5 & 5 \\ \hline
1 & 5 & 5 & 5 \\
2 & 5 & 5 & 5 \\
3 & 1 & 1 & 5
\end{array}\quad
\begin{array}{r|rrr}
P & 1 & 2 & 3 \\ \hline
1 & 5 & 5 & 5 \\
2 & 5 & 5 & 5 \\
3 & 1 & 1 & 1
\end{array}\quad
\begin{array}{r|rrr}
S & 1 & 2 & 3 \\ \hline
1 & 1 & 1 & 1 \\
2 & 1 & 1 & 1 \\
3 & 5 & 5 & 5 
\end{array}.
\]
Then the psyquandle bracket polynomial invariant is 
$\Phi_X^{\beta}(3_1^k)=2u^4+u^2.$
\end{example}

\begin{example}
Using \texttt{python} code, we computed the psyquandle bracket polynomial 
values for the 2-bouquet graphs in \cite{O} for the psyquandle bracket over 
$\mathbb{Z}_5$ specified by
\[
\begin{array}{r|rrr}
\utr & 1 & 2 & 3 \\ \hline
1 & 1 & 1 & 2 \\
2 & 2 & 2 & 1 \\
3 & 3 & 3 & 3
\end{array} \quad
\begin{array}{r|rrr}
\otr & 1 & 2 & 3 \\ \hline
1 & 1 & 1 & 1 \\
2 & 2 & 2 & 2 \\
3 & 3 & 3 & 3\\
\end{array} \quad
\begin{array}{r|rrr}
\ud & 1 & 2 & 3 \\ \hline
1 & 2 & 2 & 2 \\
2 & 1 & 1 & 1 \\
3 & 3 & 3 & 3
\end{array}\quad
\begin{array}{r|rrr}
\od & 1 & 2 & 3 \\ \hline
1 & 2 & 2 & 1 \\
2 & 1 & 1 & 2 \\
3 & 3 & 3 & 3
\end{array}
\]
\[
\begin{array}{r|rrr}
A & 1 & 2 & 3 \\ \hline
1 & 1 & 1 & 1 \\
2 & 1 & 1 & 2 \\
3 & 3 & 3 & 1\\
\end{array} \quad
\begin{array}{r|rrr}
B & 1 & 2 & 3 \\ \hline
1 & 1 & 1 & 1 \\
2 & 1 & 1 & 2 \\
3 & 3 & 3 & 1 \\
\end{array} \quad
\begin{array}{r|rrr}
P & 1 & 2 & 3 \\ \hline
1 & 1 & 1 & 2 \\
2 & 4 & 1 & 4 \\
3 & 1 & 1 & 2
\end{array}\quad
\begin{array}{r|rrr}
S & 1 & 2 & 3 \\ \hline
1 & 1 & 1 & 1 \\
2 & 4 & 1 & 2 \\
3 & 3 & 3 & 2
\end{array}.
\]
The results are listed in the table.
\[
\begin{array}{r|l} 
\Phi_{X}^{\beta}(L) & L \\ \hline
u^2 & 1_1^l, 3_1^l, 4_1^l, 5_1^l, 5_3^l, 6_2^l, 6_4^l, 6_5^l, 6_6^l, 6_7^l, 6_{12}^l \\
u^2+4u^3 & 5_2^l, \\
u^2+4u^4 & 6_9^l, 6_{11}^l \\
2u^2+u^4 & 0_1^k, 2_1^k, 3_1^k, 4_1^k, 4_2^k, 4_3^k, 5_1^k, 5_2^k, 5_3^k, 5_4^k, 5_5^k, 5_6^k, 5_7^k, 5_8^k, 6_1^k, 6_2^k, 6_3^k, 6_4^k, \\
& 6_5^k, 6_6^k, 6_7^k, 6_8^k, 6_9^k, 6_{10}^k, 6_{11}^k, 6_{12}^k, 6_{13}^k, 6_{14}^k, 6_{15}^k, 6_{16}^k, 6_{17}^k, 6_{18}^k, 6_{19}^k \\
5u^2 & 6_1^l, 6_3^l, 6_8^l, 6_{11}^l \\
%[[(0, 0, 1), u**4 + 2*u**2], [(2, 0, 1), u**4 + 2*u**2], [(3, 0, 1), u**4 + 2*u**2], [(4, 0, 1), u**4 + 2*u**2], [(4, 0, 2), u**4 + 2*u**2], [(4, 0, 3), u**4 + 2*u**2], [(5, 0, 1), u**4 + 2*u**2], [(5, 0, 2), u**4 + 2*u**2], [(5, 0, 3), u**4 + 2*u**2], [(5, 0, 4), u**4 + 2*u**2], [(5, 0, 5), u**4 + 2*u**2], [(5, 0, 6), u**4 + 2*u**2], [(5, 0, 7), u**4 + 2*u**2], [(5, 0, 8), u**4 + 2*u**2], [(6, 0, 1), u**4 + 2*u**2], [(6, 0, 2), u**4 + 2*u**2], [(6, 0, 3), u**4 + 2*u**2], [(6, 0, 4), u**4 + 2*u**2], [(6, 0, 5), u**4 + 2*u**2], [(6, 0, 6), u**4 + 2*u**2], [(6, 0, 7), u**4 + 2*u**2], [(6, 0, 8), u**4 + 2*u**2], [(6, 0, 9), u**4 + 2*u**2], [(6, 0, 10), u**4 + 2*u**2], [(6, 0, 11), u**4 + 2*u**2], [(6, 0, 12), u**4 + 2*u**2], [(6, 0, 13), u**4 + 2*u**2], [(6, 0, 14), u**4 + 2*u**2], [(6, 0, 15), u**4 + 2*u**2], [(6, 0, 16), u**4 + 2*u**2], [(6, 0, 17), u**4 + 2*u**2], [(6, 0, 18), u**4 + 2*u**2], [(6, 0, 19), u**4 + 2*u**2], [(1, 1, 1), u**2], [(3, 1, 1), u**2], [(4, 1, 1), u**2], [(5, 1, 1), u**2], [(5, 1, 2), 4*u**3 + u**2], [(5, 1, 3), u**2], [(6, 1, 1), 5*u**2], [(6, 1, 2), u**2], [(6, 1, 3), 5*u**2], [(6, 1, 4), u**2], [(6, 1, 5), u**2], [(6, 1, 6), u**2], [(6, 1, 7), u**2], [(6, 1, 8), 5*u**2], [(6, 1, 9), 4*u**4 + u**2], [(6, 1, 10), 5*u**2], [(6, 1, 11), 4*u**4 + u**2], [(6, 1, 12), u**2]]
\end{array}
\]
\end{example}

\section{\large\textbf{Questions}}\label{Q}

We end with some questions for future research.

Our examples in this paper use only small psyquandles and brackets with
coefficients in small finite rings due to computation speed. We are 
greatly interested in faster methods for finding psyquandle brackets,
both in the form of more efficient computational search algorithms for
brackets with coefficients in larger finite rings as well as theoretically
motivated methods for finding psyquandle brackets with coefficients in
infinite rings, e.g. Laurent polynomials over $\mathbb{Z}$.

In the classical biquandle bracket case, there are connections between 
biquandle brackets and biquandle cocycle invariants; a psyquandle homology
theory has yet to be worked out; perhaps psyquandle bracket examples can 
provide clues.

\bibliography{sn-no2}{}

\begin{thebibliography}{10}

\bibitem{ANR}
L.~Aggarwal, S.~Nelson, and P.~Rivera.
\newblock Quantum enhancements via tribracket brackets.
\newblock {\em arXiv:1907.03011}, 2019.

\bibitem{CJKLS}
J.~S. Carter, D.~Jelsovsky, S.~Kamada, L.~Langford, and M.~Saito.
\newblock Quandle cohomology and state-sum invariants of knotted curves and
  surfaces.
\newblock {\em Trans. Amer. Math. Soc.}, 355(10):3947--3989, 2003.

\bibitem{JCSN2}
J.~Ceniceros, A.~Christiana, and S.~Nelson.
\newblock Psyquandle coloring quivers.
\newblock {\em arXiv:2107.05668}, 2021.

\bibitem{JCSN1}
J.~Ceniceros and S.~Nelson.
\newblock Cocycle enhancements of psyquandle counting invariants.
\newblock {\em Internat. J. Math.}, 32(5):Paper No. 2150023, 14, 2021.

\bibitem{ChoN}
K.~Cho and S.~Nelson.
\newblock Quandle coloring quivers.
\newblock {\em J. Knot Theory Ramifications}, 28(1):1950001, 12, 2019.

\bibitem{FN}
P.~C. Falkenburg and S.~Nelson.
\newblock Biquandle bracket quivers.
\newblock {\em J. Knot Theory Ramifications (to appear)}.

\bibitem{GNO}
N.~G\"{u}g\"{u}mc\"{u}, S.~Nelson, and N.~Oyamaguchi.
\newblock Biquandle brackets and knotoids.
\newblock {\em J. Knot Theory Ramifications}, 30(9):Paper No. 2150064, 17,
  2021.

\bibitem{HVW}
W.~Hoffer, A.~Vengal, and V.~Winstein.
\newblock The structure of biquandle brackets.
\newblock {\em J. Knot Theory Ramifications}, 29(6):2050042, 13, 2020.

\bibitem{IM}
D.~P. Ilyutko and V.~O. Manturov.
\newblock Picture-valued parity-biquandle bracket.
\newblock {\em J. Knot Theory Ramifications}, 29(2):2040004, 22, 2020.

\bibitem{NOR}
S.~Nelson, M.~E. Orrison, and V.~Rivera.
\newblock Quantum enhancements and biquandle brackets.
\newblock {\em J. Knot Theory Ramifications}, 26(5):1750034, 24, 2017.

\bibitem{NO}
S.~Nelson and N.~Oyamaguchi.
\newblock Trace diagrams and biquandle brackets.
\newblock {\em Internat. J. Math.}, 28(14):1750104, 24, 2017.

\bibitem{NOS}
S.~Nelson, N.~Oyamaguchi, and R.~Sazdanovic.
\newblock Psyquandles, singular knots and pseudoknots.
\newblock {\em Tokyo J. Math.}, 42(2):405--429, 2019.

\bibitem{NR}
S.~Nelson and V.~Rivera.
\newblock Quantum enhancements of involutory birack counting invariants.
\newblock {\em J. Knot Theory Ramifications}, 23(7):1460006, 15, 2014.

\bibitem{O}
N.~Oyamaguchi.
\newblock Enumeration of spatial 2-bouquet graphs up to flat vertex isotopy.
\newblock {\em Topology Appl.}, 196(part B):805--814, 2015.

\bibitem{VW}
A.~Vengal and V.~Winstein.
\newblock Categorifying biquandle brackets.
\newblock {\em J. Knot Theory Ramifications}, 31(7):Paper No. 2250052, 24,
  2022.

\end{thebibliography}
\bibliographystyle{abbrv}

\bigskip

\noindent
\textsc{Department of Mathematical Sciences \\
Claremont McKenna College \\
850 Columbia Ave. \\
Claremont, CA 91711 USA} 

\

\noindent
\textsc{1-3 Kagurazaka \\
Shinjuku-ku, Tokyo 162-8601 Japan}

\end{document}